\newtheorem{Theorem}{Theorem}[section]
\newtheorem{Lemma}[Theorem]{Lemma}
\newtheorem{Proposition}[Theorem]{Proposition}
\newtheorem{Remark}[Theorem]{Remark}
\begin{document}

\begin{center}
{\large\bf Remarks on minimal rational curves on moduli spaces of stable bundles \ }

\vspace{0.2cm}

Liu Min \\
College of Mathematics, Qingdao University,  Qingdao 266071, P.R.China\\
E-mail: liumin@amss.ac.cn\\
\end{center}

\footnotetext{ Supported by the National Natural Science Foundation of China (Grant No. 11401330) }

\begin{abstract}
Let $M$ be the moduli space of stable bundles of rank 2 and with
fixed determinant $\mathcal{L}$ of degree $d$ on a smooth projective
curve $C$ of genus $g\geq 2$. When $g=3$ and $d$ is even, we prove,
for any point $[W]\in M$, there is a minimal rational curve passing
through $[W]$, which is not a Hecke curve. This complements a
theorem of Xiaotao Sun.
\end{abstract}

\section{Introduction}

Let $C$ be a smooth projective curve of genus $g\geq 2$ and $\mathcal{L}$ a line bundle on $C$ of degree $d$. Let
$M:=SU_{C}(r,\mathcal{L})$ be the moduli space of stable vector bundles of rank $r$ and with the fixed determinant $\mathcal{L}$,
which is a smooth quasi-projective Fano-variety with $Pic(M)=\mathbb{Z}\cdot\Theta$ and $-K_{M}=2(r,d)\Theta$,
where $\Theta$ is an ample divisor(\cite{Ramanan} \cite{DrezetNarasimhan}).
For any rational curve $\phi: \mathbb{P}^{1}\rightarrow M$, we can define its degree $\text{deg}\phi^{*}(-K_{M})$ with
respect to the ample anti-canonical line bundle $-K_{M}$.

In \cite{Sun05}, Xiaotao Sun determines all the rational curves of
minimal degree passing through a generic point of $M$ except in the
case of $g=3, \ r=2$ and $d$ is even.

\begin{Theorem}(Theorem 1 of \cite{Sun05})\label{thm1.1} If $g\geq
3$, then any rational curve $\phi: \mathbb{P}^{1}\rightarrow M$
passing through the generic point has degree at least $2r$. It has
degree $2r$ if and only if it is a Hecke curve \textbf{unless $g=3$,
$r=2$, and $d$ is even.}
\end{Theorem}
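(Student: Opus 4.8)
The plan is to recast a rational curve on $M$ as a family of stable bundles on $\mathbb{P}^{1}\times C$ and to reduce the whole statement to a single Chern--class identity, whose sharp analysis is the crux. Since the anticanonical degree is additive under composition with finite covers, and since any rational curve of degree $<2r$ through the generic point would sweep out $M$ and hence be deformable into a free curve through that point, it suffices to treat a generically injective (hence free) morphism $\phi$. The modular meaning of $\phi$ provides, after an auxiliary twist by a line bundle pulled back from $\mathbb{P}^{1}$ (the Brauer/Poincar\'e obstruction is irrelevant for the trace-free endomorphism sheaf used below), a vector bundle $\mathcal{W}$ on $\mathbb{P}^{1}\times C$, flat over $\mathbb{P}^{1}$, with $\mathcal{W}_{t}:=\mathcal{W}|_{\{t\}\times C}$ the stable bundle represented by $\phi(t)$ and $\det\mathcal{W}_{t}\cong\mathcal{L}$. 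Writing $\pi,p_{C}$ for the two projections, I must prove (i) $\deg\phi^{*}(-K_{M})\ge 2r$; (ii) equality forces $\phi$ to be a Hecke curve, outside the stated exception; and (iii) Hecke curves attain $2r$.

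\emph{The degree formula.} Because $T_{[E]}M\cong H^{1}(C,\mathcal{E}nd_{0}E)$ and $\pi_{*}\mathcal{E}nd_{0}\mathcal{W}=0$ by stability, one has the canonical isomorphism $\phi^{*}T_{M}\cong R^{1}\pi_{*}\mathcal{E}nd_{0}(\mathcal{W})$, so that, since $\deg\phi^{*}(-K_{M})=\deg\phi^{*}T_{M}$, Grothendieck--Riemann--Roch gives
\[
\deg\phi^{*}(-K_{M})=\deg R^{1}\pi_{*}\mathcal{E}nd_{0}(\mathcal{W})=\int_{\mathbb{P}^{1}\times C}\big(2r\,c_{2}(\mathcal{W})-(r-1)\,c_{1}(\mathcal{W})^{2}\big).
\]
Here the Todd and ``odd'' contributions drop out because $c_{1}(\mathcal{E}nd_{0}\mathcal{W})=0$ and $H^{1}(\mathbb{P}^{1})=0$; the integrand is the Bogomolov discriminant $\Delta(\mathcal{W})$ and is manifestly independent of the auxiliary twist.

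\emph{The lower bound, the equality case, and the easy converse.} Normalising $\mathcal{W}$ so that its restriction to a general ruling $\mathbb{P}^{1}\times\{x\}$ is as balanced as possible, the term $\int c_{1}(\mathcal{W})^{2}$ becomes explicit and the content of (i) is a lower bound on $\int c_{2}(\mathcal{W})$. Since $\phi$ is nonconstant, the splitting type of $\mathcal{W}|_{\mathbb{P}^{1}\times\{x\}}$ jumps over a nonempty finite subset of $C$, and each jump contributes positively to $\int c_{2}(\mathcal{W})$; the essential input is the stability of the fibers $\mathcal{W}_{t}=W$, which both makes $\phi^{*}T_{M}$ nef and bounds the total jumping from below, forcing $\int\Delta(\mathcal{W})\ge 2r$. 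I expect this estimate to be the main obstacle: one must show both that the minimum equals $2r$ and that it is attained only when the variation of $\mathcal{W}$ along $C$ is a single length-one elementary modification supported over one point $x_{0}\in C$. Granting this, the equality case supplies an exact sequence $0\to p_{C}^{*}V\to\mathcal{W}\to\mathcal{T}\to 0$ with $V$ of determinant $\mathcal{L}(-x_{0})$ and $\mathcal{T}$ of length one on $\mathbb{P}^{1}\times\{x_{0}\}$; for generic $[W]$, genericity ensures that $V$ and all the modified bundles are stable and that the classifying map is an embedding, so $\phi$ is a Hecke curve. Direction (iii) is the easy converse: inserting the universal family of Hecke modifications into the displayed formula (equivalently, invoking Narasimhan--Ramanan and Hwang) yields $\int\Delta=2r$.

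\emph{The exceptional case.} The excluded triple $g=3$, $r=2$, $d$ even is precisely where the rigidity used above fails: there $\dim M=6$, and the special geometry of $M$ admits degree-$2r=4$ families whose fiberwise variation is not a single-point, rank-one modification, so the minimizers of $\int\Delta$ are no longer uniquely Hecke (indeed the present paper exhibits such a non-Hecke curve). Pinning down why the sharp minimizer becomes non-unique exactly in this numerical case is the delicate point accounting for the theorem's exclusion clause.
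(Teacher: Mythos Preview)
The paper does not contain a proof of this statement. Theorem~1.1 is quoted verbatim as ``Theorem~1 of \cite{Sun05}'' and serves only as background: the paper's purpose is to complement Sun's result by exhibiting, in the excluded case $g=3$, $r=2$, $d$ even, a non-Hecke minimal rational curve (Theorems~1.2 and~1.3). There is therefore no proof in this paper to compare your proposal against.

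That said, your outline is broadly in the spirit of Sun's original argument in \cite{Sun05}: one realises the rational curve by a bundle $E$ on $C\times\mathbb{P}^{1}$ and computes $\deg\phi^{*}(-K_{M})$ via a discriminant-type formula. However, your sketch conflates two genuinely different cases that Sun separates, and this is where the real content lies. As the present paper recalls in its introduction, one must distinguish whether $E$ is semistable on the generic fibre of $f:C\times\mathbb{P}^{1}\to C$ (the \emph{Hecke type}) or not (the \emph{split type}). Your jump-locus heuristic tacitly assumes the generic splitting type on $\mathbb{P}^{1}\times\{x\}$ is trivial and that all variation is concentrated in isolated elementary modifications; this is exactly the Hecke-type picture, and it does yield the bound $2r$ with equality forcing a single modification. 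But in the split-type case the generic restriction is already unbalanced, $E$ sits in an extension $0\to f^{*}V_{1}\otimes\pi^{*}\mathcal{O}(1)\to E\to f^{*}V_{2}\to 0$, and the degree estimate becomes a different inequality involving $r_{1}d-rd_{1}$. It is precisely here that the genericity hypothesis on $[W]$ enters to force the degree above $2r$, and precisely here that the argument breaks when $g=3$, $r=2$, $d$ even: split-type curves of degree $4$ then exist through \emph{every} point, as this paper demonstrates. Your final paragraph gestures at this but does not identify the mechanism; the failure is not a loss of ``rigidity'' in the Hecke analysis but the appearance of a competing split-type family that your sketch never rules out.
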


It implies that all the rational curves of $(-K_{M})$-degree smaller
than $2r$, called \emph{small rational curves}, must lie in a proper
closed subset( \cite{LM2012}, \cite{MokSun09}). In this note, we
remark that the condition in Sun's Theorem is necessary:

\begin{Theorem}\label{thm1.2} If $g=3$, $r=2$ and $d$ is even, then, for any point
$[W]\in M$, there exists a rational curve of degree $4$ passing
through it, which is not a Hecke curve.
\end{Theorem}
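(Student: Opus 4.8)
The plan is to produce the required curves as lines lying in certain linear $3$-planes contained in the Coble quartic. Tensoring by a fixed line bundle of degree $d/2$ we may assume $\mathcal{L}=\mathcal{O}_C$, so that $M=SU_C(2,\mathcal{O})$, $\mathrm{Pic}(M)=\mathbb{Z}\cdot\Theta$ and $-K_M=4\Theta$; in particular a rational curve has $(-K_M)$-degree $4$ exactly when it has $\Theta$-degree $1$, and tensoring by a line bundle carries Hecke curves to Hecke curves, so it suffices to treat this case. By the theorem of Narasimhan--Ramanan, $|\Theta|$ embeds $M$ onto the smooth locus $X_{sm}$ of a quartic hypersurface $X\subset\mathbb{P}^7=|\Theta|^{*}$ (the Coble quartic), with $\Theta$ the hyperplane class and $\mathrm{Sing}(X)=\mathrm{Kum}(JC)$ the locus of strictly semistable classes. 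Hence a rational curve of $(-K_M)$-degree $4$ through $[W]$ is precisely a line of $\mathbb{P}^7$ lying on $X$ and disjoint from $\mathrm{Sing}(X)$, and Hecke curves are such lines; what must be shown is that through each $[W]\in M$ there is such a line which is not a Hecke curve. (For hyperelliptic $C$ the same description of $M$ is available, or that case can be handled by a minor variant of the argument below.)

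Fix a line bundle $L$ on $C$ of degree $-1$. By Riemann--Roch $\dim\mathrm{Ext}^1(L^{-1},L)=h^1(C,L^{2})=g+1=4$, and for every nonzero class $e$ the extension $0\to L\to W_e\to L^{-1}\to 0$ has $W_e$ semistable: a sub-line-bundle of positive degree would have to map isomorphically onto $L^{-1}$ and split the sequence. Thus the universal extension over $\mathbb{P}^3=\mathbb{P}(\mathrm{Ext}^1(L^{-1},L))$ times $C$ yields a morphism $\phi_L\colon\mathbb{P}^3\to X$. A standard determinant-of-cohomology computation along this family shows $\phi_L^{*}\mathcal{O}_{\mathbb{P}^7}(1)=\phi_L^{*}\Theta=\mathcal{O}_{\mathbb{P}^3}(1)$; since a morphism from $\mathbb{P}^3$ with this property is given by a base-point-free, hence complete, subsystem of $|\mathcal{O}_{\mathbb{P}^3}(1)|$, it follows automatically that $\phi_L$ is a linear embedding onto a $3$-plane $\Pi_L\subset X$. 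The locus $\Delta_L\subset\mathbb{P}^3$ where $W_e$ is unstable consists of the classes for which $W_e$ is $S$-equivalent to $L^{-1}(-x)\oplus L(x)$, $x\in C$; it is a curve, and $\phi_L$ sends it into $\mathrm{Sing}(X)$ and sends $\mathbb{P}^3\setminus\Delta_L$ into $X_{sm}=M$. Consequently every line of $\Pi_L$ disjoint from $\phi_L(\Delta_L)$ is a line on $X$ contained in $M$, i.e.\ a rational curve of $(-K_M)$-degree $4$ on $M$. Proving the linearity $\phi_L^{*}\Theta=\mathcal{O}_{\mathbb{P}^3}(1)$ — equivalently, identifying these $3$-planes inside the Coble quartic — is the technical heart of the argument; the bookkeeping of what it contributes is elementary.

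Finally, every point lies in such a plane and a generic line there is not a Hecke curve. For a stable bundle $W$ (rank $2$, trivial determinant, $g=3$) every line subbundle has degree $\le -1$ by stability, so the Segre invariant $s(W)=-2\max_{L'\subset W}\deg L'$ satisfies $2\le s(W)\le g=3$; being even it equals $2$, so $W$ has a line subbundle $L$ of degree $-1$, whence $[W]\in\Pi_L$ and $[W]\notin\phi_L(\Delta_L)$. Through $[W]$ there is a $\mathbb{P}^{2}$ of lines of $\Pi_L$. Remove first the at most $1$-dimensional subfamily of those meeting the curve $\phi_L(\Delta_L)$. Remove next the lines equal to a Hecke curve: a Hecke curve $C_{x,W'}$ consists of the elementary modifications $W_\mu$, $\mu\in\mathbb{P}(W'_x)$, and $W_\mu$ contains $L$ (hence lies on $\Pi_L$) only for $\mu$ equal to the line $L_x\subset W'_x$, unless $L(x)\subset W'$; so $C_{x,W'}\subset\Pi_L$ forces $W'$ to contain the degree-$0$ subsheaf $L(x)$, which cuts the family of Hecke curves through $[W]$ down to dimension $\le 1$. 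Since a $2$-dimensional family of lines is not exhausted by these two $\le 1$-dimensional subfamilies, there remains a line $\ell\subset\Pi_L$ through $[W]$ contained in $M$ and distinct from every Hecke curve; as $\ell$ is a line it has $(-K_M)$-degree $4$, which proves the theorem. The main obstacle, as noted, is the linearity statement for $\phi_L$ (and, with it, the precise form of the Narasimhan--Ramanan description); the rest is the careful separation of these lines from Hecke curves carried out above.
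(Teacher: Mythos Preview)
Your construction is at bottom the same as the paper's: both use that when $g=3$ and $d$ is even the Segre invariant is forced to be $2$, pick a sub-line-bundle $L_1$ of degree $d/2-1$, form the universal extension over $\mathbb{P}\,\mathrm{Ext}^1(L_2,L_1)\cong\mathbb{P}^{g}$, observe that the non-stable locus there is at most a curve, and take a line through $[W]$ missing it. Where you diverge is in the packaging and in how you check the two conclusions. For the degree, the paper simply quotes Sun's formula that a line of ``split type'' has $(-K_M)$-degree $2(r,d)=4$, while you recover the same fact as the linearity $\phi_L^{*}\Theta=\mathcal{O}_{\mathbb{P}^3}(1)$ via the Coble-quartic embedding (an equivalent statement, which you rightly flag as the unproved technical input). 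For ``not a Hecke curve'', the paper uses Sun's dichotomy: the bundle on $C\times\mathbb{P}^1$ underlying this family is unstable on the generic fiber of $\pi$, so the curve is of split type, whereas a Hecke curve is by definition of Hecke type (semistable on the generic fiber); hence the constructed curve cannot be a Hecke curve, in one line. You instead run a dimension count showing that Hecke curves through $[W]$ contained in $\Pi_L$ form at most a $1$-parameter family inside the $\mathbb{P}^2$ of lines through $[W]$---this is correct, but more work than needed. The paper's route is also more general: it never invokes the Coble quartic, applies uniformly to every $g\ge 3$ and every $[W]\in M_2$ (this is its Theorem~1.3, of which the $g=3$ statement is the special case $M_2=M$), and requires no separate handling of the hyperelliptic case, where the $|\Theta|$-map you rely on is no longer an embedding.
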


Recall that, by Lemma 2.1 of \cite{Sun05}, any rational curve
$\phi:\mathbb{P}^{1}\to M$ is defined by a vector bundle $E$ on $f:
X=C\times\mathbb{P}^{1}\to C$. \textbf{If $E$ is semi-stable on
generic fiber} $X_{\xi}=f^{-1}(\xi)$, according to the arguments of
\cite{Sun05}, there is a finite set $S\subset C$ of points and a
vector bundle $V$ on $C$ such that $E$ is suited in the exact
sequence $$0\to f^*V\to E\to \bigoplus_{p\in S}\mathcal{Q}_p\to 0$$
where $\mathcal{Q}_p$ is a vector bundle on
$X_p=\{p\}\times\mathbb{P}^{1}$. The curves defined by such $E$ was
called of \textbf{Hecke type }in \cite{Sun2011} (since a Hecke curve
by definition is defined by a vector bundle $E$ suited in $0\to
f^*V\to E\to \mathcal{O}_{X_p}(-1)\to 0$). \textbf{If $E$ is not
semi-stable on generic fiber} $X_{\xi}$ (curves defined by such $E$
was called of \textbf{split type} in \cite{Sun2011}) and the curve
has minimal degree (i.e. a line), then $E$ must suite in
$$0\to f^*V_1\otimes\pi^*\mathcal{O}_{\mathbb{P}^1}(1)\to E\to
f^*V_2\to 0$$ where $\pi:X\to \mathbb{P}^1$ is the projection and
$V_1$, $V_2$ are stable vector bundles on $C$ of rank $r_1$, $r_2$
and degree $d_1$, $d_2$ satisfying $r_1d-rd_1=(r,d)$.

The rational curves we constructed in Theorem \ref{thm1.2} are of
split type (thus they are not Hecke curves). We in fact have a more
general result. Let $M=\mathcal{S}U_C(2,\mathcal{L})$ be the moduli
space of rank two stable bundles with fixed determinant
$\mathcal{L}$ on a smooth projective curve $C$ of genus $g\ge 3$.
Let $M_s\subset M$ be the locus of stable bundles $[W]\in M$ with
Segre invariant $s(W)=s$. Then we have

\begin{Theorem}\label{thm1.3} When $d$ is even, for any $[W]\in
M_2$, there is a rational curve of split type passing through it,
which has degree $4$. If $d$ is odd, for any $[W]\in M_1$, there is
a rational curve of split type passing through it, which has degree
$2$.
\end{Theorem}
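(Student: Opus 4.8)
The plan is to construct, for each $[W]\in M_s$ with $s=s(W)$ the Segre invariant (so $s=2$ when $d$ is even, $s=1$ when $d$ is odd), a rational curve of split type through $[W]$ by exhibiting an explicit bundle $E$ on $X=C\times\mathbb{P}^1$ fitting in
$$0\to f^*V_1\otimes\pi^*\mathcal{O}_{\mathbb{P}^1}(1)\to E\to f^*V_2\to 0,$$
where $V_1,V_2$ are line bundles on $C$ with $\deg V_1+\deg V_2=d$ and the Segre condition $r_1 d - r d_1 = (r,d)$ reads $d-2\deg V_1 = (2,d)$, i.e. $\deg V_1 = (d-s)/2$ with $s=(2,d)$. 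The point of the split-type picture is that the restriction of $E$ to a generic fiber $X_\xi$ is the non-split extension $0\to V_1\to E_\xi\to V_2\to 0$, and as $\xi$ varies we must hit the prescribed bundle $W$. Concretely, given $[W]\in M_s$, choose a line subbundle $L\subset W$ with $\deg L = (\deg\mathcal{L}-s)/2$ realizing the Segre invariant, set $V_1=L$, $V_2=\mathcal{L}\otimes L^{-1}$, and then $W$ corresponds to a nonzero class in $\mathrm{Ext}^1(V_2,V_1)$; the family of extensions parametrized by $\mathbb{P}(\mathrm{Ext}^1(V_2,V_1))$ (or a suitable line in it) gives the rational curve, with $W$ as one of its members. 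One must check that the generic member of this family is again stable (so the curve actually lands in $M$), and compute the degree of the curve against $-K_M = 4\Theta$ (since $(r,d)=s$... careful: $(2,d)=2$ if $d$ even, $1$ if $d$ odd, so $-K_M = 2(2,d)\Theta$, degree $4$ resp. $2$).

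**The key steps, in order.** First, record the numerics: with $r=2$, the split-type line bundle $E$ has $c_1(E) = f^*\mathcal{L}\otimes\pi^*\mathcal{O}(1)$-type data determined by $(\deg V_1,\deg V_2,+1)$, and one checks directly that $\deg\phi^*(-K_M) = 2(2,d)\cdot(r_1 d - r d_1)/(r,d)\cdot(\text{something})$ — more precisely, following Lemma 2.1 and the degree formula in \cite{Sun05}, the curve defined by a split-type $E$ with the minimal Segre-type condition $r_1 d - r d_1 = (r,d)$ has degree exactly $2r$ when the extension is by $\pi^*\mathcal{O}_{\mathbb{P}^1}(1)$; here $2r=4$, and in the odd case one instead gets a genuine line of degree $2$. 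Second, the existence of the subbundle $L\subset W$ of the right degree: this is exactly the statement $s(W)=s$, so $V_1,V_2$ exist for every $[W]\in M_s$ by definition of $M_s$. Third, form the universal extension over $C\times\mathbb{P}(\mathrm{Ext}^1(V_2,V_1)^\vee)$, restrict to a line $\ell\cong\mathbb{P}^1$ through the point $[W]$, and pull back to get $E$ on $C\times\mathbb{P}^1$; this $E$ is automatically of split type on the generic fiber because a nonzero extension of $V_2$ by $V_1$ with $\deg V_1 > \deg V_2$ (or $\ge$, handled by instability of the canonical filtration) is never semistable as a bundle on the curve in the relevant range — wait, one needs $\mu(V_1)\ge\mu(V_2)$, i.e. $\deg V_1\ge \deg V_2$; when $d$ is even this is $\deg V_1 = (d-2)/2 = d/2-1 < d/2$, so actually $\mu(V_1)<\mu(V_2)$ and the generic $E_\xi$ is stable, not unstable — so the split-type description must come from the $\pi^*\mathcal{O}(1)$ twist making $E$ itself non-semistable on fibers in a different sense; I will need to re-derive which of the two bundle filtrations on $E|_{X_\xi}$ destabilizes, namely the sub-line-bundle $V_1\otimes\mathcal{O}(1)|_{X_\xi} = V_1$ has the same degree on the fiber but the point is the \emph{relative} slope, so the correct statement is that $E$ restricted to $X_\xi$ is the extension $0\to V_1\to E_\xi\to V_2\to 0$ and split type refers to the Harder–Narasimhan type over the second factor. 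Fourth, verify stability of the generic $E_\xi$: a non-split extension of two line bundles of degrees $d_1<d_2$ with $d_2-d_1 = s$ small is stable iff it has no sub-line-bundle of degree $>d/2-$; the only candidate destabilizer other than $V_1$ would force a splitting, so genericity of the extension class suffices — and membership of $[W]$ in the family is guaranteed because $W$ itself is such an extension.

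**The main obstacle** I anticipate is Step four together with the passage from "generic member of the family is stable and of the right type" to "the specific point $[W]$ lies on a rational curve all of whose members we can control well enough to read off the degree." Concretely: the family $\mathbb{P}(\mathrm{Ext}^1(V_2,V_1)^\vee)$ may have dimension $> 1$, so one must choose the line $\ell$ through $[W]$ avoiding the (proper) locus of unstable or strictly-semistable or wrong-type extensions, and one must verify this locus is indeed proper — for $d$ even and $s=2$ this is a codimension count on $\mathrm{Ext}^1$, the dimension of which is $d_2-d_1+g-1 = s+g-1 = g+1$ when $d$ is even, plenty of room for $g\ge 3$; for $d$ odd, $s=1$, $\mathrm{Ext}^1$ has dimension $g$, still $\ge 3$. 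A subtler point is confirming that the resulting map $\mathbb{P}^1\to M$ is non-constant and has exactly the claimed degree $2r$ (resp. a line of degree $2$): this requires computing $\deg\phi^*\Theta$ via the determinant-of-cohomology description of $\Theta$ applied to $E$, which is the one genuinely computational input and where I would lean on the formulae already set up in \cite{Sun05} rather than redo them. Finally one notes that since $E$ is not semistable on the generic fiber in the over-$\mathbb{P}^1$ sense, the curve is of split type, hence by the trichotomy recalled in the introduction it is not a Hecke curve, giving Theorem \ref{thm1.2} as the special case $g=3$.
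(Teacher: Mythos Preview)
Your approach is exactly the paper's: pick a maximal sub-line-bundle $L_1\subset W$ realizing the Segre invariant, set $L_2=W/L_1$, and take a line through $[W]$ in $\mathbb{P}\,\mathrm{Ext}^1(L_2,L_1)$; the degree $2(r,d)$ is already recorded in Section~2 (citing \cite{Sun05}, \cite{MokSun09}), so no separate determinant-of-cohomology computation is needed.

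The one point to correct is your Step~4 in the even case. A nonsplit extension $0\to L_1\to E\to L_2\to 0$ with $\deg L_2-\deg L_1=2$ need \emph{not} be stable, and a degree-$d/2$ sub-line-bundle $L'\hookrightarrow E$ does not force a splitting: since $\deg L'>\deg L_1$ the composite $L'\to L_2$ is nonzero, so $L'\cong L_2\otimes\mathcal{O}_C(-x)$ for some $x\in C$, and the condition is that the inclusion $L_2(-x)\hookrightarrow L_2$ lifts to $E$. The paper's Lemma~2.1(ii) and Remark~2.2 identify the non-stable locus in $\mathbb{P}\,\mathrm{Ext}^1(L_2,L_1)$ with the image of $C$ under the map given by the linear system $|K_C\otimes L_1^{-1}\otimes L_2|$, hence of dimension at most~$1$. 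Since $\dim\mathbb{P}\,\mathrm{Ext}^1(L_2,L_1)=g\ge 3$, a generic line through $[W]$ avoids it --- this is the precise codimension statement you were anticipating. (In the odd case $\deg L_2-\deg L_1=1$ and every nonsplit extension is already stable, so any line through $[W]$ works.)
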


  When $g=3$ and $d$ is even, we have $M_2=M$ (see Lemma 3.1). Thus Theorem
  \ref{thm1.2} is a corollary of Theorem 1.3.

\section{Rational curves of split type}

Let $C$ be a smooth irreducible projective curve with genus $g\geq2$ over an algebraically closed field,
 $W$ be a stable bundle of rank $r$
and with determinant $\mathcal{L}$
over $C$. If there is a stable subbundle $V_{1}$ of $W$ such that
\begin{equation}
r_{1}d-d_{1}r=(r, d),
\label{eq:(r1,d1)}
\end{equation}
where $r_{1}=\text{rank}V_{1}$, $d_{1}=\text{deg}V_{1}$ and $d=\text{deg}W$. Let $V_{2}:=V/V_{1}$ be the quotient  bundle, then $W$ fits a
non-trivial extension
\begin{equation}
 0\rightarrow V_{1}\rightarrow W\rightarrow V_{2}\rightarrow 0.
\label{eq:p0}
\end{equation}

 It is known that there is a family of vector bundles $\{\mathcal{E}_{p}\}_{p\in P}$ on $C$ parametrized by $P=\mathbb{P}Ext^{1}(V_{2},V_{1})$
 so that for each $p\in P$, the $\mathcal{E}_{p}$ is isomorphic to the bundle obtained as the extension of $V_{2}$ by $V_{1}$ given by $p$
 ( see Lemma 2.3 of \cite{Ramanan} ). Let $l$ be a line in $P=\mathbb{P}Ext^{1}(V_{2},V_{1})$ passing through the point $p_{0}$,
 where $p_{0}$ is the point in $P$ given by (\ref{eq:p0}). If it happens that $\mathcal{E}_{p}$ is stable for each $p\in l$, then
 $$\{\mathcal{E}_{p}\}_{p\in l}$$
will define a rational curve of degree $2(r,d)$ (with respect to
$-K_{M}$) passing through $[W]\in SU_{C}(r,\mathcal{L})$
(\cite{Sun05}, \cite{MokSun09}). Such a rational curve in
$SU_{C}(r,\mathcal{L})$ will be called a \textbf{rational curve of
split type}.

It is known that an extension $0\rightarrow E \rightarrow W
\rightarrow F \rightarrow 0$, where $E, W, F$ are vector bundles on
$C$, gives rise to an element $\delta(W)\in H^{1}(C, Hom(F,E))$
which is the image of the identity homomorphism in $H^{0}(C,
Hom(F,F))$ by the connecting homomorphism $H^{0}(C,
Hom(F,F))\rightarrow H^{1}(C, Hom(F,E))$. This gives a one-one
correspondence between the set of equivalent classes of extensions
of $F$ by $E$ and $H^{1}(C, Hom(F,E))$ (\cite{Ramanan}).

\begin{Lemma}
Let $d$ be an even number, and $0\rightarrow L_{1}\rightarrow W \rightarrow L_{2}\rightarrow 0$ be any non-trivial extension of $L_{2}$ by $L_{1}$,
where $L_{1}$ ( resp. $L_{2}$ ) is a line bundle of degree $\frac{d}{2}-1$ ( resp. $\frac{d}{2}+1$ ). Then

(i) $W$ is semi-stable;

(ii) $W$ is non-stable if and only if the element $\delta(W)\in H^{1}(C, L_{2}^{-1}\otimes L_{1})$ corresponding to $W$ is in the kernel of the map
$$ H^{1}(C, L_{2}^{-1}\otimes L_{1})\longrightarrow H^{1}(C, L_{2}^{-1}\otimes L_{1}\otimes L_{x}),$$
for some $x\in C$. In this case , $W$ is $S$-equivalent to $L_{2}\otimes L_{x}^{-1}\oplus L_{1}\otimes L_{x}$.

\label{lm:non-stable of extension}
\end{Lemma}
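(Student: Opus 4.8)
The plan is to analyze the extension $0\to L_1\to W\to L_2\to 0$ directly via degrees of sub–line bundles. For part (i), suppose $N\subset W$ is a line subbundle with $\deg N>\tfrac d2$, i.e. $\deg N\geq \tfrac d2+1$ (degrees are integers). If the composite $N\hookrightarrow W\to L_2$ is zero, then $N\subset L_1$, forcing $\deg N\leq \tfrac d2-1$, a contradiction. Hence $N\to L_2$ is a nonzero map of line bundles, so $\deg N\leq \deg L_2=\tfrac d2+1$, with equality only if $N\cong L_2$ and the map is an isomorphism — but then the extension splits, contradicting non-triviality. Therefore $\deg N\leq \tfrac d2$ for every line subbundle, which is exactly semistability of the rank-$2$ bundle $W$ of degree $d$.

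For part (ii), $W$ is strictly semistable (non-stable) iff it contains a line subbundle $N$ with $\deg N=\tfrac d2$ (equivalently a quotient line bundle of degree $\tfrac d2$). Writing $M:=L_2\otimes L_x^{-1}$ for a suitable effective divisor-twist, I want to identify such an $N$ with a bundle of the form $L_2\otimes L_x^{-1}$ (a point of $C$ determines $L_x$, and $\deg(L_2\otimes L_x^{-1})=\tfrac d2$). The key point: a nonzero map $N\to L_2$ with $\deg N=\deg L_2-1$ is, up to scalar, the inclusion $L_2(-x)\hookrightarrow L_2$ for a unique $x\in C$; lifting this inclusion to a map $N\to W$ splitting off $N$ is obstructed precisely by the image of $\delta(W)$ under the composite
$$H^1(C,L_2^{-1}\otimes L_1)\xrightarrow{\ \cdot\,s_x\ }H^1(C,L_2^{-1}(x)\otimes L_1)=H^1(C, N^{-1}\otimes L_1),$$
where $s_x$ is the section of $L_x=\mathcal{O}_C(x)$ vanishing at $x$; concretely, the pullback extension $0\to L_1\to W'\to N\to 0$ along $N\hookrightarrow L_2$ has class equal to that image, and it splits iff the class vanishes. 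So $W$ contains such an $N$ as a direct summand-complement sub iff $\delta(W)$ lies in the kernel of this map for some $x\in C$. This is the stated condition, upon noting $L_2^{-1}(x)\otimes L_1=L_2^{-1}\otimes L_1\otimes L_x$.

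Finally, when this holds, we get a sub-line bundle $L_1$ and another $N\cong L_2\otimes L_x^{-1}$ with $\deg N=\tfrac d2$, so $N$ and the quotient $W/L_1$-type destabilizing pieces must be handled: the saturation of $N$ in $W$ has degree $\geq\tfrac d2$, hence exactly $\tfrac d2$ by semistability, so $N$ is already saturated; then $W/N$ is a line bundle of degree $\tfrac d2$, and $\det W=\mathcal{L}$ gives $W/N\cong \mathcal{L}\otimes N^{-1}=L_1\otimes L_2\otimes L_2^{-1}\otimes L_x=L_1\otimes L_x$. The Jordan–Hölder factors of $W$ are therefore $\{N,\,W/N\}=\{L_2\otimes L_x^{-1},\,L_1\otimes L_x\}$, so $W$ is $S$-equivalent to $L_2\otimes L_x^{-1}\oplus L_1\otimes L_x$, as claimed.

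The main obstacle I anticipate is making the cohomological identification in (ii) fully precise: namely, checking that the destabilizing sub-line bundles of degree exactly $\tfrac d2$ are in natural bijection with pairs $(x,\text{lifting})$ and that "a lifting exists" translates exactly into "$\delta(W)\mapsto 0$" under multiplication by $s_x$. This requires chasing the long exact sequence in cohomology associated to $0\to L_2^{-1}\otimes L_1\xrightarrow{s_x}L_2^{-1}\otimes L_1\otimes L_x\to (\text{skyscraper at }x)\to 0$ together with the functoriality of the connecting homomorphism $\delta$ under pullback of extensions; everything else is a routine degree count using semistability.
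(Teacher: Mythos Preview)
Your proposal is correct and follows essentially the same approach as the paper: identifying destabilizing sub-line bundles of degree $\tfrac d2$ with line bundles of the form $L_2\otimes L_x^{-1}$ (using that such a subbundle cannot map into $L_1$ for degree reasons), and then recognizing that a lifting $L_2\otimes L_x^{-1}\to W$ of the canonical inclusion exists precisely when the image of $\delta(W)$ in $H^1(C,L_2^{-1}\otimes L_1\otimes L_x)$ vanishes. The paper packages this via a commutative diagram of long exact $\mathrm{Hom}$-sequences (and simply cites a reference for part (i)), but the content is the same as your pullback-of-extensions argument; your explicit saturation and Jordan--H\"older discussion for the $S$-equivalence claim is a welcome addition that the paper leaves implicit.
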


\begin{proof}
(i) See Lemma 2.2 in \cite{MokSun09} and \cite{MokSun14}.

(ii) Let $L'$ be a line bundle of degree $\frac{d}{2}$. Then, since $H^{0}(C,\text{Hom}(L', L_{1}))=0$,
 it is easy to see that $H^{0}(C,\text{Hom}(L',W))\neq 0$ if and only if $L'$ is of the form $L_{2}\otimes L_{x}^{-1}$ for some $x\in C$ and the natural map $L_{2}\otimes L_{x}^{-1}\rightarrow L_{2}$ can be lifted into a map $L_{2}\otimes L_{x}^{-1}\rightarrow W$.

 Consider the commutative diagram of vector bundles
 \[\begin{CD}
0\rightarrow Hom(L_{2},L_{1})@>>>Hom(L_{2},W)@>>>Hom(L_{2},L_{2})\rightarrow0\\
@VVV@VVV@VVV\\
0\rightarrow Hom(L_{2}\otimes L_{x}^{-1},L_{1})@>>>Hom(L_{2}\otimes L_{x}^{-1}, W)@>>>Hom(L_{2}\otimes L_{x}^{-1},L_{2})\rightarrow0,
\end{CD} \]
 where the horizontal sequences are exact and the vertical maps ar induced by the natural map $L_{2}\otimes L_{x}^{-1}\rightarrow L_{2}$ . From this we deduce the commutative diagram
  \[\scriptsize\begin{CD}
0\rightarrow H^{0}(C,Hom(L_{2},W))@>>>H^{0}(C,Hom(L_{2},L_{2}))@>>>H^{1}(C,Hom(L_{2},L_{1}))\rightarrow\cdots\\
@VVV@VVV@VVV\\
0\rightarrow H^{0}(C,Hom(L_{2}\otimes L_{x}^{-1}, W))@>>>H^{0}(C,Hom(L_{2}\otimes L_{x}^{-1},L_{2}))@>>>H^{1}(C,Hom(L_{2}\otimes L_{x}^{-1},L_{1}))\rightarrow\cdots
\end{CD} \]
 Which implies the lemma.
\end{proof}

\begin{Remark} Lemma \ref{lm:non-stable of extension} (ii) asserts that the non-stable bundles in $PH^{1}(L_{2}^{-1}\otimes L_{1})$
corresponds precisely to the image of $C$ in $PH^{1}(L_{2}^{-1}\otimes L_{1})$ under the map given by the linear system
$K\otimes L_{1}^{-1}\otimes L_{2} $. Which implies that the dimension of the subset of non-stable bundles in $PH^{1}(L_{2}^{-1}\otimes L_{1})$
is at most 1.
 \label{rk:extension of line bundles}
\end{Remark}

\section{The Proof of Theorem 1.3}

 Let $C$ be a smooth irreducible curve over an algebraically closed field of characteristic zero,
  $W$ a vector bundle of rank $2$ over $C$, set
\begin{equation}
m(W):=max\{\text{deg}(L)| L\subset W \text{ is a sub line bundle of } W\},
\label{def:m(W)}
\end{equation}
where the maximum is taken over all sub line bundles $L$ of $W$. A sub line bundle $L$ of $W$ of maximal degree $m(W)$ is called a
\textbf{maximal sub line bundle}.

The \textbf{Segre invariant} is defined by
\begin{equation}
s(W):=\text{deg}(W)-2m(W).
\label{def:s(w)}
\end{equation}
 Note that $s(W)\equiv\text{deg}(W)\ (\text{mod }2)$ and that $W$ is stable ( resp. semi-stable ) if and only if $s(W)\geq1$ ( resp. $s(W)\geq0$ ).
 Nagata proved in \cite{Nagata70} that $$s(W)\leq g.$$

\begin{Lemma}
 If $g=3$, then, for any stable bundle $W$ over $C$ of rank 2 and with even degree $d$, we have $s(W)=2$.
\label{lm:s(w)=2}
\end{Lemma}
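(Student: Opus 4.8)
The plan is to deduce this immediately from the three facts about the Segre invariant recalled just before the statement: its parity, the characterization of (semi)stability, and Nagata's upper bound.

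First I would record the parity constraint. Since $s(W)=\deg(W)-2m(W)=d-2m(W)$ and $d$ is even by hypothesis, $s(W)$ is an even integer. Next, because $W$ is stable we have $s(W)\geq 1$, and combining this with the parity observation forces $s(W)\geq 2$. Finally, Nagata's theorem (\cite{Nagata70}) gives $s(W)\leq g$, which in the case $g=3$ reads $s(W)\leq 3$; since $s(W)$ is even, this sharpens to $s(W)\leq 2$. Putting the two inequalities together yields $s(W)=2$, as claimed.

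There is no real obstacle here: the statement is a formal consequence of results quoted earlier in the section, and the only thing to be careful about is invoking the parity $s(W)\equiv d\pmod 2$ to upgrade both the stability bound $s(W)\geq 1$ and Nagata's bound $s(W)\leq 3$ to the even values $s(W)\geq 2$ and $s(W)\leq 2$. One might optionally remark that this also shows every stable $W$ of even degree on a genus-$3$ curve has a maximal sub line bundle of degree exactly $\tfrac{d}{2}-1$, which is the form in which the lemma will be used together with Lemma \ref{lm:non-stable of extension}; but this is just a restatement and requires no further argument.
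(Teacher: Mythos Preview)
Your argument is correct and follows exactly the paper's approach: combine stability ($s(W)\geq 1$), Nagata's bound ($s(W)\leq g=3$), and the parity $s(W)\equiv d\equiv 0\pmod 2$ to conclude $s(W)=2$. The added remark about the maximal sub line bundle of degree $\tfrac{d}{2}-1$ is a harmless (and useful) restatement.
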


\begin{proof}
Since $W$ is stable, we have $1\leq s(W)$ and $s(W)\leq g=3$ by
Nagata's Theorem (\cite{Nagata70}). At the same time, since $d$ is
even , it is easy to see that $s(W)\equiv 0 \text{ mod 2 }$ by the
definition. Thus we must have $s(W)=2$.
\end{proof}

In general, the function $s: M\longrightarrow \mathbb{Z}$ defined by
$[W]\longmapsto s(W)$ is lower semicontinuous and this gives a
stratification of $M$ into locally closed subsets $M_{s}$ according
to the value of $s$. Then, by Proposition 3.1 in
\cite{LangNarasimhan83}, we have

\begin{Proposition}(\cite{LangNarasimhan83})
Suppose $1\leq s\leq g-2$ and $s\equiv d (\text{ mod } 2)$. Then
$M_{s}$ is an irreducible algebraic variety of dimension 2g+s-2.
\label{prop:dim(Ms)}
\end{Proposition}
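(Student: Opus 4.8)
The plan is to realize $M_s$ as the birational image of an explicit irreducible parameter space built from extensions of line bundles, following the method of Lange and Narasimhan. Fix the numerical data forced by the stratum: if $[W]\in M_s$ then a maximal sub-line-bundle $L_1\subset W$ has degree $d_1=(d-s)/2$, and the quotient $L_2=\mathcal{L}\otimes L_1^{-1}$ has degree $d_2=(d+s)/2$. Since $\deg(L_1\otimes L_2^{-1})=-s<0$ we have $h^0(C,L_1\otimes L_2^{-1})=0$, so by Riemann--Roch $h^1(C,L_1\otimes L_2^{-1})=g+s-1$, independently of $L_1$.

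First I would construct the parameter space. Over the Jacobian $J^{d_1}$ (irreducible of dimension $g$), the relative $\mathrm{Ext}$-sheaf is locally free of constant rank $g+s-1$ by the constancy of $h^1$ together with cohomology and base change; projectivizing it gives a projective bundle $\mathcal{P}\to J^{d_1}$ whose fibre over $L_1$ is $\mathbb{P}H^1(C,L_1\otimes L_2^{-1})$, parametrizing non-trivial extensions $0\to L_1\to W\to L_2\to 0$. Thus $\mathcal{P}$ is irreducible of dimension $g+(g+s-2)=2g+s-2$. Each such extension produces a rank-two bundle $W$ with $\det W=\mathcal{L}$ and $L_1\hookrightarrow W$, so $m(W)\ge d_1$ and hence $s(W)\le s$.

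Next I would show that a generic member of $\mathcal{P}$ lands in $M_s$, so that the natural rational map $\Phi:\mathcal{P}\dashrightarrow M_s$, $(L_1,\text{class})\mapsto [W]$, is dominant. For this I bound the excess locus where $W$ carries a sub-line-bundle $N$ of degree $d_1+k$ with $k\ge 1$: the composite $N\to W\to L_2$ must be nonzero, forcing $N=L_2(-D)$ with $D\in C^{(s-k)}$, and the extension classes for which the inclusion $N\hookrightarrow L_2$ lifts to $W$ form the kernel of $H^1(L_1 L_2^{-1})\to H^1(L_1 L_2^{-1}(D))$, a linear subspace of dimension $s-k$. Counting $D$, the class, and $L_1$ bounds this locus by $g+2(s-k)-1\le g+2s-3<2g+s-2$. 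Hence the generic member of $\mathcal{P}$ has $s(W)=s$ exactly and, having positive Segre invariant, is stable; so $\Phi$ is defined on a dense open set and dominant, and $M_s$ is irreducible as the image of the irreducible $\mathcal{P}$.

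The main obstacle, and the only place the hypothesis $s\le g-2$ is genuinely used, is to prove that $\Phi$ is generically injective, equivalently that the generic $[W]\in M_s$ has a \emph{unique} maximal sub-line-bundle. Here I would run the analogous count for a second maximal sub-line-bundle: any such is of the form $L_2(-E)$ with $E\in C^{(s)}$, and the classes admitting it form a subspace of dimension $s$ in $H^1(L_1 L_2^{-1})$; varying $E$ and $L_1$ bounds the locus of bundles carrying two distinct maximal sub-line-bundles by $g+2s-1$. The inequality $g+2s-1<2g+s-2$ is exactly $s\le g-2$, so under the hypothesis this two-sub-bundle locus is a proper closed subset of $\mathcal{P}$, and the generic fibre of $\Phi$ is a single point. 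Birationality of $\Phi$ then gives $\dim M_s=\dim\mathcal{P}=2g+s-2$. I expect this final genericity estimate to be the delicate point, since it must be made uniform over $J^{d_1}$ and requires controlling the determinantal loci cut out by the lifting conditions.
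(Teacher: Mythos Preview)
The paper does not prove this proposition at all: it is quoted verbatim from Lange--Narasimhan \cite{LangNarasimhan83} (their Proposition~3.1) and used as a black box, with no argument supplied. So there is no ``paper's own proof'' to compare against.

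That said, your proposal is a faithful reconstruction of the original Lange--Narasimhan argument and is correct in outline. The dimension counts check out: for the excess locus with $k\ge 1$ you need $g+2(s-k)-1<2g+s-2$, which holds for all $s\le g$; for the two-maximal-sub-bundle locus you need $g+2s-1<2g+s-2$, i.e.\ $s\le g-2$, exactly as you say. One small point worth making explicit in the uniqueness step: when $E\in C^{(s)}$ and $N=L_2(-E)$ is a second maximal sub-line-bundle, the claim that the lifting kernel in $H^1(L_1L_2^{-1})$ has dimension exactly $s$ uses $h^0(L_1L_2^{-1}(E))=0$; since $\deg(L_1L_2^{-1}(E))=0$, this fails precisely when $L_1L_2^{-1}(E)\cong\mathcal{O}_C$, i.e.\ when $N=L_1$, which is excluded by hypothesis. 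With that caveat your estimate is sharp.
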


The proof of Theorem 1.3 follows the following two propositions.

\begin{Proposition}
Suppose $g\geq3$, $r=2$, $d$ is even and $M_{2}$ is non-empty. Then,
for any $[W]\in M_{2}$, there is a rational curve of split type
passing through it, which has degree $4$. \label{prop:M2}
\end{Proposition}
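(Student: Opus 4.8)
The plan is to realize a given $[W] \in M_2$ as a member of a family of extensions built from line bundles, and then show a generic line through $[W]$ in the relevant extension space consists of stable bundles. Since $[W] \in M_2$, we have $s(W) = 2$, so there is a maximal sub line bundle $L_1 \subset W$ with $\deg L_1 = m(W) = \tfrac{d-2}{2} = \tfrac{d}{2} - 1$; set $L_2 = W/L_1$, a line bundle of degree $\tfrac{d}{2}+1$, and note $\det W = L_1 \otimes L_2 = \mathcal{L}$. The equality $\deg W - 2m(W) = 2$ is exactly the numerical condition $r_1 d - d_1 r = (r,d)$ of (\ref{eq:(r1,d1)}) with $r_1 = 1$, $d_1 = \tfrac{d}{2}-1$, $r = 2$, $d = d$, so the setup of Section 2 applies: $W$ corresponds to a point $p_0 \in P = \mathbb{P}\,\mathrm{Ext}^1(L_2, L_1)$, and any line $l \subset P$ through $p_0$ consisting entirely of stable bundles defines a rational curve of split type of degree $2(r,d) = 4$ through $[W]$.

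So the task reduces to producing a line $l$ through $p_0$ all of whose points give stable bundles. First I would compute $\dim P$: by Riemann--Roch, $\dim \mathrm{Ext}^1(L_2, L_1) = h^1(C, L_2^{-1} \otimes L_1) = h^0(C, L_2^{-1}\otimes L_1) - \deg(L_2^{-1}\otimes L_1) + g - 1$; since $\deg(L_2^{-1}\otimes L_1) = -2 < 0$ we get $h^0 = 0$, so $\dim \mathrm{Ext}^1(L_2,L_1) = g+1$ and $\dim P = g \geq 3$. Next, by Lemma \ref{lm:non-stable of extension}, the bundle $\mathcal{E}_p$ associated to $p \in P$ is semistable for every $p$, and is non-stable precisely when $\delta(\mathcal{E}_p)$ lies in the kernel of $H^1(C, L_2^{-1}\otimes L_1) \to H^1(C, L_2^{-1}\otimes L_1 \otimes L_x)$ for some $x \in C$; by Remark \ref{rk:extension of line bundles}, the locus $\Sigma \subset P$ of non-stable bundles is the image of $C$ under the map given by the linear system $|K \otimes L_1^{-1}\otimes L_2|$, hence $\dim \Sigma \leq 1$.

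Now the conclusion follows by a dimension count: $p_0 \notin \Sigma$ since $W$ itself is stable, and I want a line through $p_0$ avoiding $\Sigma$ except possibly at $p_0$ — but in fact avoiding it entirely is even better and is what I would aim for. The lines in $P \cong \mathbb{P}^g$ through the fixed point $p_0$ are parametrized by $\mathbb{P}(T_{p_0}P) \cong \mathbb{P}^{g-1}$; the ones meeting $\Sigma \setminus \{p_0\}$ form the image of $\Sigma \setminus \{p_0\}$ under projection from $p_0$, which has dimension at most $\dim \Sigma \leq 1$. Since $g \geq 3$, we have $g - 1 \geq 2 > 1$, so a generic line $l$ through $p_0$ meets $\Sigma$ only at $p_0$ — and since $p_0 \notin \Sigma$, it misses $\Sigma$ altogether. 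Hence every $\mathcal{E}_p$ for $p \in l$ is stable, giving the desired rational curve of split type of degree $4$ through $[W]$.

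The only genuinely delicate point is making sure the family $\{\mathcal{E}_p\}_{p\in l}$ really produces a \emph{nonconstant} morphism $l \to M$ of the claimed degree rather than collapsing, and that its degree is exactly $2(r,d) = 4$ and not a multiple; this is where I would lean on the cited computations in \cite{Sun05} and \cite{MokSun09} that compute the $(-K_M)$-degree of such split families — the numerical condition (\ref{eq:(r1,d1)}) is precisely what pins the degree to $2(r,d)$. Everything else is the dimension count above, which is robust because $g \geq 3$ gives the needed inequality $g - 1 > \dim \Sigma$ with room to spare.
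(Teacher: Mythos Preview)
Your proposal is correct and follows essentially the same approach as the paper's own proof: exhibit $W$ as a nontrivial extension of $L_2$ by $L_1$ with the degrees forced by $s(W)=2$, invoke Lemma~\ref{lm:non-stable of extension} and Remark~\ref{rk:extension of line bundles} to bound the non-stable locus in $\mathbb{P}\mathrm{Ext}^1(L_2,L_1)$ by dimension~$1$, and use $\dim P = g \geq 3$ to find a line through $p_0$ missing that locus. Your write-up is in fact more explicit than the paper's (you spell out the Riemann--Roch computation of $\dim P$ and the projection argument for avoiding $\Sigma$), but the strategy is identical.
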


\begin{proof}
For any $[W]\in M_2$, there is a sub line bundle $L_{1}$ of $W$ with
$\text{deg}L_{1}=\frac{d}{2}-1$, where $d=\text{deg}\mathcal{L}$.
Let $L_{2}:=W/L_{1}$ be the quotient bundle, which has degree
$\frac{d}{2}+1$. It is easy to see that
$$1\times d-(\frac{d}{2}-1)\times 2=2=(2,d).$$
Let $i: L_{1}\rightarrow W$ be the natural injection, then
\[\begin{CD}
0@>>>L_{1}@>i>>W@>>>L_{2}@>>>0
\end{CD} \]
is a non-trivial extension (otherwise, we have $W\cong L_{1}\oplus
L_{2}$, which contradicts to the stability of $W$).

It is known that there is a family of vector bundles $\mathcal{E}$
on $C$ parametrized by $P_{(L_{1},L_{2})}=\mathbb{P}Ext^{1}(L_{2},
L_{1})$ so that for each $p\in P_{(L_{1},L_{2})}$, the
$\mathcal{E}_{p}$ is isomorphic to the bundle obtained as the
extension of $L_{2}$ by $L_{1}$ given by $p$ ( see Lemma 2.3 of
\cite{Ramanan} ). More precisely, there is a universal extension
\begin{equation}
0\rightarrow f^{*}L_{1}\otimes \pi^{*}\mathcal{O}_{P_{(L_{1},L_{2})}}(1)\rightarrow \mathcal{E}\rightarrow f^{*}L_{2}\rightarrow 0
\label{eq:2.4}
\end{equation}
on $C\times P_{(L_{1},L_{2})}$, where $f: C\times P_{(L_{1},L_{2})}\rightarrow C$ and $\pi: C\times P_{(L_{1},L_{2})}\rightarrow P_{(L_{1},L_{2})}$
are projections. Then $\mathcal{E}$ is a family of semi-stable bundles of rank $r$ and with fixed determinant
$\text{det}(L_{1})\otimes \text{det}(L_{2})\cong\mathcal{L}$ ( Lemma \ref{lm:non-stable of extension}). Thus, the universal extension (\ref{eq:2.4})
defines a morphism
\begin{equation}
\Phi_{(L_{1}, L_{2})}:\ P_{(L_{1},L_{2})}\longrightarrow U_{C}(2,\mathcal{L}),
\label{morphism:phi}
\end{equation}
where $U_{C}(2,\mathcal{L})$ denotes the moduli space of semi-stable bundles of rank 2 and with fixed determinant $\mathcal{L}$, which is a projective
closure of $M$.

It is easy to see that $P_{(L_{1},L_{2})}$ is a projective space of
dimension $g\ge 3$. By Lemma \ref{lm:non-stable of extension} and
Remark \ref{rk:extension of line bundles}, there is a line $l$ in
$P_{(L_{1},L_{2})}$ passing through
 \[\begin{CD}
q=[0@>>>L_{1}@>i>>W@>>>L_{2}@>>>0]
\end{CD} \]
such that $\mathcal{E}_{p}$ is stable for each $p\in l$. Thus,
$\Phi_{(L_{1}, L_{2})}(l)\subset M=SU_{C}(2,\mathcal{L})$ and
\begin{equation}
\Phi_{(L_{1}, L_{2})}|_{l}: l\rightarrow M=SU_{C}(2,\mathcal{L})
\end{equation}
is a rational curve of split type passing through the point $[W]\in
M$.
\end{proof}

\begin{Proposition}
Suppose $g\geq 2$, $r=2$, $d$ is odd and $M_{1}$ is non-empty. Then,
for any $[W]\in M_{1}$, there is a rational curve of split type
passing through it, which has degree $2$. \label{prop:M1}
\end{Proposition}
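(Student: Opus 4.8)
The plan is to follow the strategy used for Proposition \ref{prop:M2}, with one simplification that is special to the odd-degree case: when the sub and quotient line bundles differ in degree by exactly $1$, \emph{every} non-trivial extension is automatically stable, so there is no non-stable locus to avoid and any line through the base point works.

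First I would fix $[W]\in M_{1}$. Since $s(W)=1$ and $d$ is odd, $m(W)=\frac{d-1}{2}$, so there is a maximal sub line bundle $L_{1}\subset W$ with $\text{deg}L_{1}=\frac{d-1}{2}$; put $L_{2}:=W/L_{1}$, so $\text{deg}L_{2}=\frac{d+1}{2}$. The numerical condition \eqref{eq:(r1,d1)} holds, since $1\times d-\frac{d-1}{2}\times 2=1=(2,d)$. The extension $0\to L_{1}\to W\to L_{2}\to 0$ is non-trivial, as otherwise $W\cong L_{1}\oplus L_{2}$ would contain $L_{2}$ of degree $\frac{d+1}{2}>\frac{d}{2}$, contradicting the stability of $W$.

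Next I would prove the analogue of Lemma \ref{lm:non-stable of extension}, now with an \emph{empty} non-stable locus: for $P:=\mathbb{P}Ext^{1}(L_{2},L_{1})$, every bundle $\mathcal{E}_{p}$ with $p\in P$ is stable. Indeed $\mathcal{E}_{p}$ has rank $2$ and odd degree $d$, so it suffices to show that every saturated sub line bundle $N\subset \mathcal{E}_{p}$ has degree $\le \frac{d-1}{2}$. If $N\subseteq L_{1}$ then $N=L_{1}$ and $\text{deg}N=\frac{d-1}{2}$. Otherwise the composite $N\to \mathcal{E}_{p}\to L_{2}$ is a non-zero, hence injective, map of line bundles, so $\text{deg}N\le \text{deg}L_{2}=\frac{d+1}{2}$, with equality forcing an isomorphism $N\cong L_{2}$ and hence a splitting of the extension --- impossible since $p\ne 0$. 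So $\text{deg}N\le \frac{d-1}{2}$ in every case, and $\mathcal{E}_{p}$ is stable. (It is here that oddness of $d$ enters: for even $d$ a sub line bundle of the borderline degree $\frac{d}{2}$ is possible, which is precisely what produces the $1$-dimensional non-stable locus of Lemma \ref{lm:non-stable of extension}.)

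Finally I would note that $\text{deg}(L_{2}^{-1}\otimes L_{1})=-1$, so $H^{0}(C,L_{2}^{-1}\otimes L_{1})=0$ and Riemann--Roch gives $\dim H^{1}(C,L_{2}^{-1}\otimes L_{1})=g$; hence $P$ is a projective space of dimension $g-1\ge 1$. As in the construction \eqref{eq:2.4}--\eqref{morphism:phi}, the universal extension on $C\times P$ gives a morphism $\Phi_{(L_{1},L_{2})}:P\to U_{C}(2,\mathcal{L})$ which, by the previous paragraph, factors through $M=SU_{C}(2,\mathcal{L})$. Choosing \emph{any} line $l\subset P$ through the point $q$ corresponding to $W$ (possible since $\dim P\ge 1$), the restriction $\Phi_{(L_{1},L_{2})}|_{l}:l\to M$ is a rational curve of split type passing through $[W]$, of degree $2(r,d)=2$ with respect to $-K_{M}$. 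I do not expect a serious obstacle here: the only point requiring care is the appeal to \cite{Sun05} and \cite{MokSun09} for the fact that $\Phi_{(L_{1},L_{2})}|_{l}$ is a genuine (non-constant) rational curve of $(-K_{M})$-degree exactly $2(r,d)$.
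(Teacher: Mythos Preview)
Your proof is correct and follows essentially the same route as the paper: find the maximal sub line bundle $L_{1}$, note the numerical condition $(r,d)=1$, use the universal extension over $P=\mathbb{P}\mathrm{Ext}^{1}(L_{2},L_{1})$, and restrict to a line through the point corresponding to $W$. The only differences are cosmetic: where the paper cites Lemma~3.1 of \cite{Sun05} for the stability of every non-trivial extension, you supply the direct argument; and you explicitly verify $\dim P=g-1\ge 1$ via Riemann--Roch, a point the paper uses implicitly.
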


\begin{proof}
Let $[W]$ be a point in $M_{1}$, then we have $s(W)=1$ and there is
a sub line bundle $L_{1}$ of $W$ with
$\text{deg}L_{1}=\frac{d-1}{2}$, where $d=\text{deg}\mathcal{L}$.
Let $L_{2}:=W/L_{1}$, which is a line bundle of degree
$\frac{d+1}{2}$. It is easy to see that
$$1\times d-\frac{d-1}{2}\times 2=1=(2,d).$$
Let $\iota: L_{1}\rightarrow W$ be the natural injection, then
\[\begin{CD}
0@>>>L_{1}@>\iota>>W@>>>L_{2}@>>>0
\end{CD} \]
is a non-trivial extension because $W$ is a stable bundle.

It is known that there's a family of vector bundles $\mathcal{E}$ on
$C$ parametrized by $P_{(L_{1},L_{2})}=\mathbb{P}Ext^{1}(L_{2},
L_{1})$ so that for each $p\in P_{(L_{1},L_{2})}$, the
$\mathcal{E}_{p}$ is isomorphic to the bundle obtained as the
extension of $L_{2}$ by $L_{1}$ given by $p$ ( see Lemma 2.3 of
\cite{Ramanan} ). By Lemma 3.1 of \cite{Sun05}, $\mathcal{E}$ is a
family of stable bundles of rank $2$ and with fixed determinant
$\text{det}(L_{1})\otimes \text{det}(L_{2})\cong\mathcal{L}$, which
defines a morphism
\begin{equation}
\Psi_{(L_{1}, L_{2})}:\ P_{(L_{1},L_{2})}\longrightarrow SU_{C}(2,\mathcal{L})=M.
\label{morphism:psi}
\end{equation}
Let $l$ be a line in $P_{(L_{1},L_{2})}$ passing through
 \[\begin{CD}
q=[0@>>>L_{1}@>\iota>>W@>>>L_{2}@>>>0],
\end{CD} \]
then
\begin{equation}
\Psi_{(L_{1}, L_{2})}|_{l}: l\longrightarrow M=SU_{C}(2,\mathcal{L})
\end{equation}
is a rational curve of split type passing through the point $[W]\in
M$, which has degree $2$.
\end{proof}

When $g=2$, the same as Lemma \ref{lm:s(w)=2}, we have:
\begin{Lemma}
If $g=2$, $r=2$ and $d$ is odd, for any $[W]\in M$, $s(W)=1$.
\label{lm:s(w)=1}
\end{Lemma}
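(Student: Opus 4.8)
The plan is to imitate the proof of Lemma \ref{lm:s(w)=2} verbatim, exploiting the same three inputs: the stability of $W$, Nagata's upper bound, and the parity constraint on the Segre invariant. So first I would record that, because $W$ is stable, one has $s(W)\geq 1$ by the characterization of (semi)stability in terms of the Segre invariant stated just after the definition of $s(W)$. Next I would invoke Nagata's Theorem (\cite{Nagata70}), which gives $s(W)\leq g=2$ in the present situation.

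Then I would use the congruence $s(W)\equiv \deg(W)\ (\mathrm{mod}\ 2)$, also noted after \eqref{def:s(w)}: since here $d=\deg(W)=\deg\mathcal{L}$ is odd, $s(W)$ must be odd. Combining $1\leq s(W)\leq 2$ with $s(W)$ odd forces $s(W)=1$, which is the assertion. Since this holds for every stable $W$ of rank $2$ with the given determinant, it holds for every $[W]\in M$.

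There is essentially no obstacle here; the only thing to double-check is that Nagata's inequality $s(W)\le g$ is being applied in the correct normalization (rank $2$, genus $2$), which it is. In fact, as with Lemma \ref{lm:s(w)=2}, the content of the statement is just that when $g$ is small the interval $[1,g]$ contains a unique integer of the required parity, so the assignment $[W]\mapsto s(W)$ is constant on $M$ and $M=M_1$; this is exactly the input needed to feed Proposition \ref{prop:M1} and thereby complete the odd-degree case of Theorem \ref{thm1.3} when $g=2$.
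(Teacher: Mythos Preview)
Your proof is correct and follows exactly the approach the paper indicates: the paper does not give a separate argument for this lemma but simply says ``the same as Lemma \ref{lm:s(w)=2}'', and your proposal spells out precisely that argument (stability gives $s(W)\ge 1$, Nagata gives $s(W)\le g=2$, and $s(W)\equiv d\pmod 2$ forces $s(W)$ odd, hence $s(W)=1$).
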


By Lemma \ref{lm:s(w)=1} and Proposition \ref{prop:M1}, we have:
\begin{Proposition}
If $g=2$, $r=2$ and $d$ is odd, then, for any $[W]\in M$, there
exist a rational curve of split type passing through it, which has
degree $2$.
\end{Proposition}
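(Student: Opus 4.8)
The plan is to obtain this Proposition as an immediate consequence of Lemma \ref{lm:s(w)=1} and Proposition \ref{prop:M1}; the only thing requiring a word of justification is that the hypotheses of the latter are met for \emph{every} $[W]\in M$ when $g=2$.

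First I would invoke Lemma \ref{lm:s(w)=1}: for $g=2$, $r=2$ and $d$ odd, stability forces $s(W)\ge 1$, Nagata's bound gives $s(W)\le g=2$, and the parity relation $s(W)\equiv d\equiv 1\pmod 2$ then pins down $s(W)=1$ for every $[W]\in M$. Consequently $M=M_1$; in particular $M_1$ is non-empty and contains the given point $[W]$, so Proposition \ref{prop:M1} is applicable to it. Applying that Proposition verbatim then finishes the argument: one picks a maximal sub line bundle $L_1\subset W$ with $\deg L_1=\frac{d-1}{2}$, sets $L_2:=W/L_1$ (of degree $\frac{d+1}{2}$), notes that $1\cdot d-\frac{d-1}{2}\cdot 2=1=(2,d)$ is exactly the numerical condition (\ref{eq:(r1,d1)}), and observes that the extension $0\to L_1\to W\to L_2\to 0$ is non-trivial since $W$ is stable, hence defines a point $q\in P_{(L_1,L_2)}=\mathbb{P}Ext^{1}(L_2,L_1)$. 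By Lemma 3.1 of \cite{Sun05} every member of the family $\mathcal{E}$ over $P_{(L_1,L_2)}$ is stable with determinant $\mathcal{L}$, so the morphism $\Psi_{(L_1,L_2)}$ of (\ref{morphism:psi}), restricted to any line $l\subset P_{(L_1,L_2)}$ through $q$, is a rational curve of split type through $[W]$ of $(-K_M)$-degree $2(r,d)=2$.

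I expect no genuine obstacle here, since all the substantive work lies in Lemma \ref{lm:s(w)=1}, Proposition \ref{prop:M1} and Lemma 3.1 of \cite{Sun05}. The one small point worth checking is that $P_{(L_1,L_2)}$ is non-empty with a line through $q$: by Riemann--Roch $\deg(L_2^{-1}\otimes L_1)=-1$ gives $h^0=0$ and $h^1(C,L_2^{-1}\otimes L_1)=g=2$, so $P_{(L_1,L_2)}\cong\mathbb{P}^1$ and the ``line'' of Proposition \ref{prop:M1} is simply the whole parameter space.
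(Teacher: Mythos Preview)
Your proposal is correct and follows exactly the paper's approach: the Proposition is stated as an immediate consequence of Lemma \ref{lm:s(w)=1} and Proposition \ref{prop:M1}, with no further argument given. Your additional unpacking of those results and the Riemann--Roch check that $P_{(L_1,L_2)}\cong\mathbb{P}^1$ (so the ``line'' is the whole parameter space) are accurate elaborations not made explicit in the paper.
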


\section*{Acknowledgments}
The author is grateful to her supervisor Prof. Xiaotao Sun for his helpful suggestions in the preparation of this paper
and to Prof. Meng Chen and Prof. Kejian Xu for their help.

\end{document}